\theoremstyle{plain}
\newtheorem{theorem}{Theorem}
\newtheorem{lemma}{Lemma}
\theoremstyle{definition}
\def\Z{\mathbb Z}
\def\Q{\mathbb Q}
\def\ssum{{\textstyle \sum}}
\begin{document}

\title{Sums of Quadratic residues and nonresidues}

\author{Christian Aebi}
\author{Grant Cairns}

\address{Coll\`ege Calvin, Geneva, Switzerland 1211}
\email{christian.aebi@edu.ge.ch}
\address{Department of Mathematics, La Trobe University, Melbourne, Australia 3086}
\email{G.Cairns@latrobe.edu.au}

\maketitle

\begin{abstract}
It is well known that when a prime $p$ is congruent to 1 modulo 4, the sum of the quadratic residues equals the sum of the quadratic nonresidues. In this note we give analogous results for the case where $p$ is congruent to 3 modulo 4.
\end{abstract}

 
Let $p$ be a prime with $p \equiv 3 \pmod 4$. Identify $\Z_p$ with the set $\{0,1,\dots,p-1\}$, and let $\Z_p^l=\{i\in \Z_p: 1<i<(p-1)/2\}$ and  $\Z_p^u= \{i\in \Z_p: i>(p-1)/2\}$.
Let $Q$ denote the set of quadratic residues of $\Z_p$ and write $Q^l=Q\cap \Z_p^l$ and $Q^u=Q\cap \Z_p^u$.
Similarly, let $N$ denote the set of quadratic nonresidues of $\Z_p$ and write $N^l=N\cap \Z_p^l$ and $N^u=N\cap \Z_p^u$.
For a set $A\subset \Z_p$, let us write $\ssum A$ for the sum in $\Z$ of the elements of $A$. 

The following Theorem may be deduced from well known formulas for the class number of the quadratic field $\Q[\sqrt{-p}]$; see \cite[p.~202]{Ma} and \cite[Chap.~6 (19)]{Da}. The purpose of this note is to give an elementary direct proof.
 Part (a) is stated in \cite[Cor.~13.4]{Be}. We have not seen part (b) stated in the literature.

\begin{theorem} \ 
\begin{enumerate}
\item[(a)] If $p= 7 \pmod 8$, then $\ssum Q^l=\ssum N^l$.
\item[(b)] If $p= 3 \pmod 8$, then $\ssum Q + \ssum Q^l= \ssum N+\ssum N^l$.
\end{enumerate}
\end{theorem}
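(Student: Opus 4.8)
The plan is to combine two bijections of $\{1,\dots,p-1\}$: negation $x\mapsto p-x$ and doubling $x\mapsto \langle 2x\rangle$, where $\langle\,\cdot\,\rangle$ denotes the least positive residue. Write $a=|Q^l|$, $b=|Q^u|$ and $q_l=\ssum Q^l$, $q_u=\ssum Q^u$, $n_l=\ssum N^l$, $n_u=\ssum N^u$, so that $a+b=(p-1)/2$. Since $p\equiv 3\pmod 4$ forces $-1$ to be a nonresidue, the involution $x\mapsto p-x$ interchanges $Q$ and $N$ while interchanging the lower and upper halves; hence it maps $Q^u$ bijectively onto $N^l$ and $Q^l$ onto $N^u$. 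This yields $n_l = bp - q_u$ and $n_u = ap - q_l$ (and incidentally $|N^l|=b$, $|N^u|=a$). These are the only consequences of negation I will need.

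The heart of the argument is the doubling map. For $x$ in the lower half $2x<p$, so $\langle 2x\rangle=2x$; for $x$ in the upper half $\langle 2x\rangle=2x-p$. Summing over $x\in Q$ and noting that exactly $b=|Q^u|$ of the terms incur the ``$-p$'', I obtain
\[
\sum_{x\in Q}\langle 2x\rangle = 2\,\ssum Q - pb.
\]
On the other hand $\left(\tfrac{2x}{p}\right)=\left(\tfrac{2}{p}\right)$ for $x\in Q$, so the set $\{\langle 2x\rangle : x\in Q\}$ equals $Q$ when $\left(\tfrac{2}{p}\right)=1$ (that is, $p\equiv 7\pmod 8$) and equals $N$ when $\left(\tfrac{2}{p}\right)=-1$ (that is, $p\equiv 3\pmod 8$). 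Equating the two evaluations gives a single master identity: $2\,\ssum Q - pb=\ssum Q$ in case (a), and $2\,\ssum Q - pb=\ssum N$ in case (b).

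For (a) the master identity reads $\ssum Q=pb$; combined with $n_l=bp-q_u$ this gives $q_l=pb-q_u=n_l$, which is exactly $\ssum Q^l=\ssum N^l$. For (b) I would substitute $\ssum N=\tfrac{p(p-1)}{2}-\ssum Q$ into the master identity to get $3\,\ssum Q=p(a+2b)$. Separately, the target $\ssum Q+\ssum Q^l=\ssum N+\ssum N^l$ rewrites as $2q_l+q_u=2n_l+n_u$, and feeding in $n_l=bp-q_u$ and $n_u=ap-q_l$ collapses it to $3(q_l+q_u)=p(a+2b)$, i.e. the very same identity. So both parts follow once the master identity is established.

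I expect the only delicate point to be the bookkeeping in the doubling step: confirming that among $x\in Q$ the number of ``carries'' (terms with $2x>p$) is precisely $|Q^u|=b$, and that doubling permutes $Q$ or carries it onto $N$ according to $\left(\tfrac{2}{p}\right)$. Once the master identity is in hand, both statements are short linear manipulations, and the pleasant surprise is that part (b), which superficially concerns four separate refined sums, reduces after the negation substitutions to a statement about the single quantity $\ssum Q$.
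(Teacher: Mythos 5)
Your proposal is correct and follows essentially the same route as the paper: the doubling map applied to $Q$ (with $|Q^u|$ "carries" of $-p$) yields the key identity $\ssum Q = p\,|Q^u|$ or $3\ssum Q = p\,|Q^u| + \binom{p}{2}$ according to the sign of $\left(\tfrac{2}{p}\right)$, and the negation map (using that $-1$ is a nonresidue) converts this into the stated relations among the partial sums. The only differences are notational — you carry the four refined sums explicitly where the paper works with $\ssum Q^u = np - \ssum N^l$ directly.
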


\begin{proof} Let $n$ denote the number of quadratic nonresidues that are less than $p/2$. 
 Note that $-1$ is a quadratic nonresidue, since $p\equiv3\pmod4$; see \cite[Chap.~24]{Sil}. So for each quadratic nonresidue $q$, the element $p-q$ is a quadratic residue. 
In particular, $Q^u$ has $n$ elements.

\begin{lemma}\
\begin{enumerate}[{\rm (a)}]
\item[(a)] If $p= 7 \pmod 8$, then $\ssum Q=n p$.
\item[(b)] If $p= 3 \pmod 8$, then $3 \ssum Q=n p+\begin{binom}p2\end{binom}$.
\end{enumerate}
\end{lemma}

\begin{proof}
Let $\sigma$ be the doubling function $x\mapsto 2x$ on $\Z$, and  consider the function $\bar\sigma$ induced on $\Z_p$ by $\sigma$. 
Notice that if $x\in Q^l$, then $\bar\sigma (x) =\sigma (x)=2x$, and  if $x\in Q^u$, then $\bar\sigma (x) =\sigma (x)-p=2x-p$. 

(a)  When $p \equiv 7\mod8$, the function $\bar\sigma$ preserves $Q$. Now, since $Q^u$ has $n$ elements, 
\[
\ssum Q=\ssum \bar\sigma (Q)=\ssum \sigma (Q^l)+ \ssum \sigma (Q^u)-np=\ssum \sigma (Q)-np,
\]
that is,
$\ssum Q=2\ssum Q-np$, giving $\ssum Q=np$, as required.

(b)  When $p \equiv 3\pmod8$, the function $\bar\sigma$ sends quadratic residues to quadratic nonresidues.  Now, since $Q^u$ has $n$ elements, and $\ssum \Z_p =\begin{binom}p2\end{binom}$,
\begin{align*}
\ssum Q=\begin{binom}p2\end{binom} -\ssum \bar\sigma (Q)&=\begin{binom}p2\end{binom}-\left(\ssum \sigma (Q^l)+ \ssum \sigma (Q^u)-np\right)\\
&=\begin{binom}p2\end{binom}-\ssum \sigma (Q)+np.
\end{align*}
Thus $\ssum Q=\begin{binom}p2\end{binom}-2\ssum Q+np$, giving $3\ssum Q=np+\begin{binom}p2\end{binom}$, as required.
\end{proof}

Turning to the proof  of the Theorem, 
 note that 
\begin{equation}\label{equ1}
n p=  np -\ssum N^l+\ssum N^l= \ssum Q^u+\ssum N^l= \ssum Q+\ssum N^l-\ssum Q^l.
\end{equation}
Thus, when $p \equiv 7\pmod8$,   since $np=\ssum Q$ by the Lemma, we have $\ssum N^l=\ssum Q^l$. 

Similarly, if $p \equiv 3\pmod8$, then by the Lemma and equation (\ref{equ1}) we obtain
\begin{align*}
3 \ssum Q=n p+\begin{binom}p2\end{binom} &\implies  3 \ssum Q = \ssum Q+\ssum N^l-\ssum Q^l +\begin{binom}p2\end{binom}\\
&\implies  2 \ssum Q = \ssum N^l-\ssum Q^l +\ssum Q + \ssum N\\
&\implies  \ssum Q -\ssum N= \ssum N^l-\ssum Q^l \\
&\implies  \ssum Q + \ssum Q^l= \ssum N+\ssum N^l.
\end{align*}
\end{proof}

\end{document}